\numberwithin{equation}{section}
\numberwithin{table}{section}
\numberwithin{figure}{section}
\theoremstyle{plain}
\newtheorem{thm}{\protect\theoremname}[section]
\theoremstyle{plain}
\newtheorem{lem}[thm]{\protect\lemmaname}
\theoremstyle{definition}
\newtheorem{defn}[thm]{\protect\definitionname}
\theoremstyle{remark}
\newtheorem{rem}[thm]{\protect\remarkname}
\providecommand{\definitionname}{Definition}
\providecommand{\definitionname}{Definition}
\providecommand{\lemmaname}{Lemma}
\providecommand{\remarkname}{Remark}
\providecommand{\theoremname}{Theorem}
\begin{document}
\title{\textbf{Semialgebraic Solution \\ of Linear Equation with \\ Continuous
Semialgebraic Coefficients}}
\author{Marcello Malagutti}
\maketitle

\section{Introduction}

\pagenumbering{arabic}

C.$\,$Fefferman gave in \cite{key-1}, by means of analysis techniques,
a necessary and sufficient condition for the existence of a continuous
solution $(\phi_{1},\cdots,\phi_{s})$ of the system 
\begin{equation}
\phi=\sum\limits _{i=1}^{s}\phi_{i}f_{i}\label{eq:System_1}
\end{equation}
given the continuous functions $\phi$ and $f_{i}$. More precisely,
C. Fefferman, applying the theory of the Glaeser refinements for bundles,
proved that system (\ref{eq:System_1}) has a continuous solution
iff the affine Glaeser-stable bundle associated with system (\ref{eq:System_1})
has no empty fiber.

Moreover J.$\,$Kollár, in the same (joint) paper \cite{key-1}, starting
from the above result and making use of algebraic geometry techniques
as blowing up and at singular points, proved that fixed the polynomials
$f_{1},...,f_{s}$ and assuming system (\ref{eq:System_1}) solvable,
then:

\noindent 1) if $\phi$ is semialgebraic then there is a solution
$(\psi_{1},\cdots,\psi_{s})$ of $\phi=\underset{i}{\sum}\psi_{i}f_{i}$
such that the $\psi_{i}$ are also semialgebraic;

\noindent 2) let $U\subset\mathbb{R}^{n}\backslash Z$ (where $Z:=(f_{1}=\cdots=f_{r}=0)$)
be an open set such that $\phi$ is $C^{m}$ on $U$ for some $1\leq m\leq\infty$
or $m=\omega$. Then there is a solution $\psi=(\psi_{1},\cdots,\psi_{s})$
of $\phi=\sum\limits_{i=1}^{s}\psi_{i}f_{i}$ such that the $\psi_{i}$
are also $C^{m}$ on $U$.\medskip{}

In this paper we generalise and prove, by using Fefferman's techniques,
a part of the above results shown by Kollár. More in details, we consider
a \emph{compact metric space} $Q\subseteq\mathbb{R}^{n}$ and a \emph{system
of linear equations}
\begin{equation}
A\left(x\right)\phi\left(x\right)=\gamma\left(x\right),\quad x=(x_{1},\ldots,x_{n})\in Q\label{eq:TargetProblem}
\end{equation}
where
\[
Q\ni x\longmapsto A(x)=(a_{ij}(x))\in M_{r,s}(\mathbb{R})
\]
is \emph{continuous semialgebraic,} with $M_{r,s}(\mathbb{R})$ denoting
the set of real $r\times s$ matrices and 
\[
Q\ni x\longmapsto\gamma(x)\in\mathbb{R}^{r},\gamma(x)=\left[\begin{array}{c}
\gamma_{1}(x)\\
\vdots\\
\gamma_{r}(x)
\end{array}\right]\in\mathbb{R}^{r}
\]
being themselves \emph{continuous semialgebraic functions} on $Q\subseteq\mathbb{R}^{n}$.

\vspace{5mm}

\noindent{\fboxsep 4pt\fbox{\begin{minipage}[t]{1\columnwidth - 2\fboxsep - 2\fboxrule}%
Our aim is to find a \emph{necessary and sufficient condition} for
the existence of a solution $Q\ni x\longmapsto\phi\left(x\right)=\left[\begin{array}{c}
\phi_{1}(x)\\
\vdots\\
\phi_{s}(x)
\end{array}\right]\in\mathbb{R}^{s}$ of system (\ref{eq:TargetProblem}), with the $\phi_{i}:Q\rightarrow\mathbb{R}$
\emph{continuous }and \emph{semialgebraic}.%
\end{minipage}}}

\vspace{0.5cm}

\noindent In particular, we find that a \emph{continuous and semialgebraic
solution} exists if and only if there exits \emph{a continuous solution
and a semialgebraic one (they may possibly be different)} under the
hypothesis that 
\[
B(x,r_{v_{x}})\ni y\longmapsto\gamma_{v_{x}}(y)=\varPi_{H_{y}^{(0)}}v_{x}
\]
is \emph{discontinuous at most at isolated points for each} $x\in Q$
where:

$-$ $H_{y}^{(0)}$ is the fiber at $y\in Q$ of the singular affine
bundle associated to

system (\ref{eq:TargetProblem});

$-$ $B(x,r_{v_{x}})\subset Q$ is an euclidean ball of small radius;

$-$ $v_{x}$ a vector of $H_{x}^{(0)}$;

$-$ $\varPi_{H_{y}^{(0)}}v_{x}$ the projection of $v_{x}$ on $H_{y}^{(0)}$.

\noindent Moreover, we show how to calculate a continuous and semialgebraic
solution of system (\ref{eq:TargetProblem}).

\section{The setting}

Let us start by setting some notations and by showing some important
preliminary results that will be used to pursue our goal. We shall
endow every $\mathbb{R}^{s}$ used here with euclidean norm.
\begin{quote}
\noindent \textbf{Notation 1:} Let $V\subseteq\mathbb{R}^{s}$ be
an affine space in $\mathbb{R}^{s}$ and $w\in\mathbb{R}^{s}$. We
denote the \emph{projection of $w$ on $V$} (i.e. the point $v\in V$
that makes the euclidean norm of $v-w$ as small as possible) by $\Pi_{V}w$.

\medskip{}
\noindent \textbf{Notation 2:} If $x$$\in Q$, consider $Q\ni x\longmapsto A(x)\in M_{r,s}(\mathbb{R})$
and $w\in\mathbb{R}^{s}$. We denote 
\[
\varPi_{1}(x)\,w=\varPi_{\mathrm{\mathrm{Ker}}A(x)^{\perp}}w,\quad\varPi_{2}(x)\,w=\varPi_{\mathrm{Ker}A(x)}w.
\]

\noindent \textbf{Notation 3:} We denote the\emph{ $i$-th column}
of a matrix $A$ by $A_{i}$.
\end{quote}
\begin{lem}
\label{lem:Lemma1}Consider

i) $Q\subseteq\mathbb{R}^{n}$ a compact space,

ii) $Q\ni x\longmapsto A(x)\in M_{r,s}(\mathbb{R})$ a\emph{ }matrix
valued semialgebraic function,

iii) $\phi:Q\rightarrow\mathbb{R}^{s}$ a map,

\noindent  and let $Q\ni x\longmapsto p(x)=\varPi_{1}(x)\,\phi(x)$
be the projection of $\phi$ on $\mathrm{Ker}A(x)^{\perp}$.

\noindent  If $Q\ni x\longmapsto\phi(x)$ is a semialgebraic function
on $Q$ then $Q\ni x\longmapsto p(x)$ is a semialgebraic function
on $Q$.
\end{lem}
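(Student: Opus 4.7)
The plan is to exhibit $\mathrm{Graph}(p)$ as a semialgebraic subset of $Q\times\mathbb{R}^{s}$, which by definition yields that $p$ is semialgebraic. The geometric input is the orthogonal decomposition $\mathbb{R}^{s}=\mathrm{Ker}\,A(x)\oplus\mathrm{Ker}\,A(x)^{\perp}$, together with the standard identification $\mathrm{Ker}\,A(x)^{\perp}=\mathrm{Im}\,A(x)^{T}$. These give the following characterisation of $p(x)=\varPi_{1}(x)\,\phi(x)$: a vector $v\in\mathbb{R}^{s}$ equals $p(x)$ if and only if
\[
A(x)\bigl(\phi(x)-v\bigr)=0\qquad\text{and}\qquad v=A(x)^{T}w\ \text{for some }w\in\mathbb{R}^{r}.
\]
The forward implication is immediate from the definition of $\varPi_{1}$; the reverse follows from uniqueness of the orthogonal decomposition, since the given $v$ and $\phi(x)-v$ are respectively in $\mathrm{Ker}\,A(x)^{\perp}$ and $\mathrm{Ker}\,A(x)$.

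First I would introduce the auxiliary set
\[
T:=\bigl\{(x,v,w)\in Q\times\mathbb{R}^{s}\times\mathbb{R}^{r}\ :\ A(x)\bigl(\phi(x)-v\bigr)=0,\ v-A(x)^{T}w=0\bigr\}.
\]
Since the entries $a_{ij}(x)$ of $A$ and the components $\phi_{j}(x)$ of $\phi$ are semialgebraic on the (semialgebraic) compact set $Q$, and the semialgebraic class is closed under finite sums, products and compositions, each scalar equation defining $T$ cuts out a semialgebraic subset of $Q\times\mathbb{R}^{s}\times\mathbb{R}^{r}$; hence $T$ itself is semialgebraic.

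Next I would apply the Tarski--Seidenberg theorem to conclude that the image of $T$ under the coordinate projection $(x,v,w)\mapsto(x,v)$ is semialgebraic. By the characterisation displayed above, this image is exactly $\mathrm{Graph}(p)$, and single-valuedness of the orthogonal projection ensures that $p$ is a genuine function of $x$. Thus $\mathrm{Graph}(p)$ is semialgebraic, which is precisely the statement that $Q\ni x\mapsto p(x)$ is a semialgebraic map.

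I do not foresee a substantive obstacle. The only non-elementary ingredient is the Tarski--Seidenberg projection theorem, invoked to eliminate the existential quantifier over $w\in\mathbb{R}^{r}$ that encodes the membership $v\in\mathrm{Im}\,A(x)^{T}=\mathrm{Ker}\,A(x)^{\perp}$. Everything else amounts to bookkeeping verifying that matrix-vector multiplication, transposition and linear combinations preserve the semialgebraic class, a standard fact that does not require any of the more delicate machinery (cell decomposition, Lojasiewicz inequalities, etc.) that will presumably be needed in later parts of the paper.
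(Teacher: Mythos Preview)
Your argument is correct and is considerably more direct than the paper's. The key observation you make---that $v=p(x)$ is characterised by the first-order condition $A(x)(\phi(x)-v)=0$ together with $\exists\,w\in\mathbb{R}^{r}:\,v=A(x)^{T}w$---lets you write $\mathrm{Graph}(p)$ as a single Tarski--Seidenberg projection of a semialgebraic set, with no further work.

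The paper instead stratifies $Q$ into the pieces $K(I)$ on which a fixed subset $(A(x)_{i}^{T})_{i\in I}$ of rows forms a basis of $\mathrm{Im}\,A(x)^{T}$, so that $\dim\mathrm{Ker}\,A(x)^{\perp}$ is constant on each piece; on $K(I)$ it then performs an explicit QR decomposition $A(x)^{T}=Q(x)R(x)$ via Householder reflections (arguing that these are semialgebraic in $x$), rotates by $Q(x)^{T}$ so that $\mathrm{Ker}\,A(x)^{\perp}$ becomes a fixed coordinate subspace, applies Tarski--Seidenberg to project onto those coordinates, and finally rotates back by $Q(x)$. Your approach bypasses both the rank stratification and the QR machinery entirely: the existential quantifier over $w$ absorbs the variation in $\dim\mathrm{Ker}\,A(x)^{\perp}$ automatically. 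The paper's route has the mild advantage of being constructive (it actually produces $p(x)$ by a semialgebraic algorithm), but for the bare statement of the lemma your argument is both shorter and conceptually cleaner.
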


\begin{proof}
First of all notice that 
\[
\mathrm{Ker}A(x)^{\perp}=\mathrm{Span}\{A(x)_{i}^{T}\}_{i\in\{1,\ldots,r\}}=\mathrm{Span}\left\{ \text{rows of }A(x)\right\} .
\]

\noindent  Then consider for a given $I\subseteq\left\{ 1,\ldots,r\right\} $
\[
K(I)=\{x\in Q:(A(x)_{\,i}^{T})_{i\in I}\text{ is a basis of }\mathrm{Im}A(x)^{T}\}.
\]
The idea we want to pursue is to project the solution $Q\ni x\longmapsto\phi(x)$
on $\mathrm{Ker}A(x)^{\perp}$ and apply the Tarski-Seidenberg theorem,
thus concluding that the projection is semialgebraic. Actually, to
apply the Tarski-Seidenberg theorem we need that the dimension of
the projection space be independent of $x\in Q$, so we will be localizing
the problem on the $K(I)$. As a matter of fact on $K(I)$ the dimension
of $\mathrm{Ker}A(x)^{\perp}=\mathrm{Span}\{A(x)_{i}^{T}\}_{i\in\{1,\ldots,r\}}=\mathrm{Im}A(x)^{T}$
is constant by definition of $K(I)$. From the semialgebraicity of
$Q\ni x\longmapsto A(x)$ it is then trivial to deduce that $K(I)$
is a semialgebraic (possibly empty) subset on $Q$ since:

\noindent $\bullet\;$the linear independence of $(A(x)_{\,i}^{T})_{i\in I}$
can be translated in terms of the minors of $A(x)$;

\noindent $\bullet\;$the condition that the vectors of $(A(x)_{\,i}^{T})_{i\in I}$
are generators of $\mathrm{Im}A(x)^{T}$ can be expressed by the following
first-order formula
\[
\forall v\in\mathrm{Im}A(x)^{T},\,\exists(\lambda_{\,i}^{T})_{i\in I}\text{ real numbers s.t.}\:v=\underset{i}{\sum}\lambda_{i}A(x)_{\,i}^{T}.
\]
As $\mathrm{Im}A(x)^{T}$ is semialgebraic by definition we get that
that condition is semialgebraic

This is the reason why it suffices to prove the lemma for $K(I)$
since if a function is semialgebraic on a finite collection of semialgebraic
sets then it is semialgebraic on their union:

\noindent $\bullet\;$ if $K(I)=\textrm{Ø}$ there is nothing to
prove;

\noindent $\bullet\;$ if $K(I)\neq\textrm{Ø}$ let
\begin{align*}
\varGamma & (I)=\{(x,\phi(x)):\quad x\in K(I)\}.
\end{align*}

$\circ\;$ If $s\geq r$ we define 
\[
V=\left\{ (x,y):\quad y=A(x)^{T}\begin{pmatrix}\lambda_{1}\\
\vdots\\
\lambda_{r}
\end{pmatrix},\;x\in K(I),\:\lambda_{i}\in\mathbb{R}\right\} \subseteq\mathbb{R}^{n}\times\mathbb{R}^{s}
\]

$\circ\;$ If $s<r$ we define 
\[
V=\left\{ (x,y):\quad y=\widetilde{A}(x)^{T}\begin{pmatrix}\lambda_{1}\\
\vdots\\
\lambda_{\left|I\right|}
\end{pmatrix},\;x\in K(I),\:\lambda_{i}\in\mathbb{R}\right\} \subseteq\mathbb{R}^{n}\times\mathbb{R}^{s}
\]
where $\widetilde{A}(x)\in M_{\left|I\right|,s}(\mathbb{R})$ and
the columns of $\widetilde{A}(x)^{T}$ form a basis of $\mathrm{Im}A(x)^{T}$
given by $(A(x)_{i}^{T})_{i\in I}$. In this case we will still write
$A(x)$ in place of $\widetilde{A}(x)$ and we will still write $r$
for $\left|I\right|$.

\noindent Let us now factorize $A(x)^{T}$ by QR decomposition: $A(x)^{T}=Q(x)R(x)$.\textcolor{black}{{}
As a matter of fact if $m\geq n$ every $m\times n$ matrix can be
written as the product of a squared orthogonal $m\times m$ matrix
$Q$ and a rectangular $m\times n$ matrix $R$ with the blockwise
structure}

\[ R=\left(\begin{array}{c} 
R_{1}\\ 
\hline
0 
\end{array}
\right) \]where $R_{1}$ is an $n\times n$ upper triangular matrix and $0$
is the $\ensuremath{(m-n)\times n}$ zero matrix.

\noindent The set $\Gamma(I)$ is semialgebraic since it is the graph
of a semialgebraic function, hence
\[
\Gamma'(I)=\left\{ \begin{pmatrix}I_{n} & 0\\
0 & Q(x)^{T}
\end{pmatrix}\begin{pmatrix}x\\
y
\end{pmatrix}:\quad(x,y)\in\Gamma(I)\right\} 
\]
 is semialgebraic too. In fact, $Q(x)$ has semialgebraic entries
since the ones of $A(x)$ are semialgebraic and the QR decomposition
can be computed by multiplying iteratively $A(x)$ by appropriate
Householder matrices. These matrices are constructed in the following
way.

\noindent Letting $\left\Vert \cdot\right\Vert $ be the euclidean
norm, $z=\left(A(x)^{T}\right)_{1}$, $v=z+\left\Vert z\right\Vert e_{1}$,
$\alpha=\mathbin{\frac{\left\Vert v\right\Vert ^{2}}{2}}$ and $U_{1}=I_{s}-\frac{vv^{T}}{\alpha}$
where $I_{s}$ is the $s\times s$ identity matrix we have that $U_{1}z=-\left\Vert z\right\Vert e_{1}$.
Now, repeating the procedure on the minor of $A(x)^{T}$ obtained
by eliminating the first row and column (the new $U_{2}$ matrix has
the form $\left(\begin{array}{cc}
I_{1} & 0\\
0 & U'
\end{array}\right)$ where $U'$ is calculated as $U_{1}$\emph{ mutatis mutandis) }we
reach the goal. The $Q$ matrix of the QR decomposition is then the
transpose of the product of all the $U_{i}$ constructed in the previous
way. The result is semialgebraic because in the construction we used
only sums, products and square root extractions of semialgebraic functions
(as the entries of $A(x)$ are semialgebraic) that are semialgebraic
by definition of semialgebraic function.

We next put
\[
V'=\left\{ \begin{pmatrix}I_{n} & 0\\
0 & Q(x)^{T}
\end{pmatrix}v:\quad v\in V\right\} 
\]
and see that if $z\in V'$ then, by the definition of $V'$, we can
write
\[
z=(z_{1},\ldots,z_{n+r},\underbrace{0,\ldots,0}_{s-r})^{T}.
\]
It is important to observe that $V'$ is obviously a vector space
of $\mathbb{R}^{n+s}$ .

Now, recalling the Tarski-Seidenberg theorem\footnote{\textbf{Tarski-Seidenberg Theorem} $Let$ $A$ \emph{a semialgebraic
subset of} $\mathbb{R}^{n+1}$ \emph{and} $\pi:\mathbb{R}^{n+1}\rightarrow\mathbb{R}^{n}$,
\emph{the projection on the first} $n$ \emph{coordinates. Then} $\pi(A)$
\emph{is a semialgebraic subset of} $R^{n}$.

\vspace{2pt}

\textbf{Corollary}\emph{ If} $A$ \emph{is a semialgebraic subset
of} $\mathbb{R}^{n+k}$, \emph{its image by the projection on the
space of the first $n$ coordinates is a semialgebraic subset of}
$\mathbb{R}^{n}$.}, we notice that the projection $\widetilde{\Gamma}'(I)$ of $\Gamma'(I)$
onto $V'$ is semialgebraic. It follows that 

\[
\widehat{\Gamma}'(I)=\left\{ \begin{pmatrix}I_{n} & 0\\
0 & Q(x)
\end{pmatrix}v,\quad v\in\widetilde{\Gamma}'(I)\right\} 
\]
is semialgebraic too. By construction, $\widehat{\Gamma}'(I)$ is
the graph of $\varPi_{1}(x)\,\phi(x),x\in K(I)$ and so the proof
of Lemma \ref{lem:Lemma1} is complete. 
\end{proof}
\begin{thm}
\label{thm:Teorema1}Consider a compact metric space\textup{ $Q\subseteq\mathbb{R}^{n}$}
and the system \emph{(\ref{eq:TargetProblem})}:
\[
A\left(x\right)\phi\left(x\right)=\gamma\left(x\right),\quad x\in Q,
\]
for semialgebraic continuous $A$ and $\gamma$.

\noindent  The system has a semialgebraic solution $\phi_{0}:Q\rightarrow\mathbb{R}^{s}$
iff, given a solution $\phi_{1}:Q\rightarrow\mathbb{R}^{s}$ of the
system,
\[
p(x)=\varPi_{1}(x)\,\phi_{1}(x)\quad is\:semialgebraic.
\]
\end{thm}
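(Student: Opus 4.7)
The plan is to reduce both directions of the equivalence to a single elementary observation combined with Lemma~\ref{lem:Lemma1}. The key remark is that $p(x)=\varPi_{1}(x)\,\phi_{1}(x)$ does not depend on the chosen solution $\phi_{1}$: any two solutions $\phi_{1},\phi_{0}$ of (\ref{eq:TargetProblem}) differ by a map with values in $\ker A(x)$, which $\varPi_{1}(x)=\varPi_{\ker A(x)^{\perp}}$ annihilates. Hence $\varPi_{1}(x)\phi_{0}(x)=\varPi_{1}(x)\phi_{1}(x)$ for every $x\in Q$, and the quantity $p(x)$ in the statement is well-defined irrespective of the solution picked.

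For the necessity ($\Rightarrow$), assuming that a semialgebraic solution $\phi_{0}:Q\to\mathbb{R}^{s}$ exists, I would invoke the observation above to rewrite $p(x)=\varPi_{1}(x)\phi_{0}(x)$, and then apply Lemma~\ref{lem:Lemma1} to the semialgebraic matrix-valued map $A$ and the semialgebraic map $\phi_{0}$. The lemma immediately gives that $x\mapsto\varPi_{1}(x)\phi_{0}(x)$ is semialgebraic on $Q$, which is exactly the conclusion required.

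For the sufficiency ($\Leftarrow$), my strategy is to argue that $p$ \emph{itself} is already the desired semialgebraic solution. Given a solution $\phi_{1}$ whose projection $p=\varPi_{1}(\cdot)\phi_{1}$ is semialgebraic, I would decompose orthogonally $\phi_{1}(x)=\varPi_{1}(x)\phi_{1}(x)+\varPi_{2}(x)\phi_{1}(x)=p(x)+q(x)$ where $q(x)\in\ker A(x)$. Applying $A(x)$ kills $q(x)$, so $A(x)p(x)=A(x)\phi_{1}(x)=\gamma(x)$ for every $x\in Q$; thus setting $\phi_{0}:=p$ produces a solution of (\ref{eq:TargetProblem}) which is semialgebraic by hypothesis.

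I do not anticipate any genuine obstacle in carrying out this plan: the sufficiency collapses to a tautology once the orthogonal decomposition of $\phi_{1}$ with respect to $\ker A(x)\oplus\ker A(x)^{\perp}$ is written down, and the necessity is a direct application of Lemma~\ref{lem:Lemma1}. The only mildly delicate point is the solution-independence of $p$, which underpins both implications; once this is explicitly recorded, the remainder of the argument is essentially a one-line verification in each direction. In that sense the whole weight of the theorem rests on Lemma~\ref{lem:Lemma1}, and the present statement should be regarded as a clean functional-analytic repackaging of that technical result.
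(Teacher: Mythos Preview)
Your proposal is correct and follows essentially the same approach as the paper: the sufficiency is identical (show that $p$ itself solves the system via the orthogonal decomposition $\phi_{1}=p+q$ with $q\in\ker A$), and the necessity is the same uniqueness argument $\varPi_{1}(x)\phi_{1}(x)=\varPi_{1}(x)\phi_{0}(x)$ followed by Lemma~\ref{lem:Lemma1}. The only cosmetic difference is that you isolate the solution-independence of $p$ as a preliminary remark, whereas the paper derives it inside the necessity direction by writing $\varPi_{1}(x)\phi_{0}(x)-\varPi_{1}(x)\phi_{1}(x)\in\ker A(x)\cap\ker A(x)^{\perp}=\{0\}$.
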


\begin{proof}
First of all we show that given a solution $\phi_{1}:Q\rightarrow\mathbb{R}^{s}$
of the system, if $Q\ni x\longmapsto p(x)=\varPi_{1}(x)\,\phi_{1}(x)$
is semialgebraic then there exists a semialgebraic solution of the
system (\ref{eq:TargetProblem}). Notice that 
\[
\phi_{1}(x)=\varPi_{2}(x)\,\phi_{1}(x)+\varPi_{1}(x)\,\phi_{1}(x),\quad\forall x\in Q.
\]
From this we get that
\[
\gamma(x)=A(x)\phi_{1}(x)=A(x)\varPi_{1}(x)\,\phi_{1}(x),\quad\forall x\in Q
\]
 So $Q\ni x\longmapsto p(x)$ is a semialgebraic solution of the system.

\noindent  Conversely we show that, given a solution $\phi_{1}:Q\rightarrow\mathbb{R}^{s}$
of the system, if the system has a semialgebraic solution $\phi_{0}:Q\rightarrow\mathbb{R}^{s}$
then
\[
Q\ni x\longmapsto p(x)=\varPi_{1}(x)\,\phi_{1}(x)\;\text{is semialgebraic.}
\]
In fact
\[
A(x)(\phi_{0}(x)-\phi_{1}(x))=\gamma\left(x\right)-\gamma\left(x\right)=0,\quad\forall x\in Q.
\]
 Decomposing $\phi_{0}(x)$ and $\phi_{1}(x)$ into their components
onto $\mathrm{Ker}A(x)$ and $\mathrm{Ker}A(x)^{\perp}$ we get, respectively,
\begin{align*}
\phi_{0}(x) & =\varPi_{2}(x)\,\phi_{0}(x)+\varPi_{1}(x)\,\phi_{0}(x),\quad\forall x\in Q,\\
\phi_{1}(x) & =\varPi_{2}(x)\,\phi_{1}(x)+\varPi_{1}(x)\,\phi_{1}(x),\quad\forall x\in Q.
\end{align*}
Hence
\[
A(x)(\varPi_{1}(x)\,\phi_{0}(x)-\varPi_{1}(x)\,\phi_{1}(x))=0,\quad\forall x\in Q,
\]
 so that
\[
\varPi_{1}(x)\,\phi_{0}(x)-\varPi_{1}(x)\,\phi_{1}(x)\in\mathrm{Ker}A(x)\bigcap\mathrm{Ker}A(x)^{\perp}=\{0\}.
\]
 Therefore
\[
\varPi_{1}(x)\,\phi_{1}(x)=\varPi_{1}(x)\,\phi_{0}(x),
\]
 and since $Q\ni x\longmapsto\varPi_{1}(x)\,\phi_{0}(x)$ is semialgebraic
by Lemma \ref{lem:Lemma1}, so is $Q\ni x\longmapsto\varPi_{1}(x)\,\phi_{1}(x)$.
\end{proof}
Let us notice that Theorem \ref{thm:Teorema1} shows also that, given
system (\ref{eq:TargetProblem}), $Q\ni x\longmapsto\varPi_{1}(x)\,\phi_{0}(x)$
is uniquely defined on $Q$ since it is independent of the solution
$\phi_{0}$.\vspace{8pt}

At this point let us consider a \emph{singular affine bundle }(or
\emph{bundle} for short) (see \cite{key-1}), meaning a family $\mathcal{H}=(H_{x})_{x\in Q}$
of affine subspaces $H_{x}\subseteq\mathbb{R}^{s}$, parametrized
by the points $x\in Q$. The affine subspaces
\[
H_{x}=\left\{ \lambda\text{\ensuremath{\in}}\mathbb{R}^{s}:A\left(x\right)\lambda=\gamma\left(x\right)\right\} ,\quad x\in Q
\]
 are the \emph{fibers} of the bundle $\mathcal{H}$. (Here, we allow
the empty set $\textrm{Ø}$ and the whole space $\mathbb{R}^{s}$
as affine subspaces of $\mathbb{R}^{s}$.)

Now we define $\mathcal{H}^{(k)}$ to be the $k$-th \emph{Glaeser
refinement} of $\mathcal{H}$ and $\mathcal{H}^{\mathrm{Gl}}$ to
be the \emph{Glaeser-stable refinement }of $\mathcal{H}$ (their fibers
will respectively be denoted by $H_{x}^{(k)}$ and $H_{x}^{\mathrm{Gl}},\quad\forall x\in Q$).
Notice that the projection on the fibers of $\mathcal{H}$ is not
linear as the fibers are affine spaces and not vector spaces.
\begin{lem}
\label{lem:Lemma2}Consider a compact metric space \textup{$(Q,d_{Q}),\:Q\subseteq\mathbb{R}^{n}$}
and an $r\times s$ system of linear equations
\[
A\left(x\right)\phi\left(x\right)=\gamma\left(x\right),\quad x\in Q
\]
where for each $x\in\mathbb{R}^{r}$ the entries of
\[
A(x)=(a_{ij}(x))\in M_{r,s}(\mathbb{R})\quad\text{and }\gamma(x)=(\gamma_{i}(x))\in\mathbb{R}^{r}
\]
are themselves semialgebric functions on $\mathbb{R}^{n}$. 

\noindent  If there is a semialgebraic solution $\phi:Q\rightarrow\mathbb{R}^{s}$
of the system then $\forall x\in Q,\:\forall v_{x}\in H_{x}^{\mathrm{Gl}}$:
\[
Q\ni y\longmapsto\gamma_{v_{x}}(y)\coloneqq\varPi_{H_{y}^{(0)}}v_{x}\text{ is semialgebraic.}
\]
\end{lem}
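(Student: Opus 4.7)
The plan is to reduce the affine projection $\Pi_{H_y^{(0)}}v_x$ to the linear projection $\Pi_1(y)$ already controlled by Lemma \ref{lem:Lemma1}, and then apply that lemma twice. Since the system is assumed to admit a (semialgebraic) solution $\phi$, every zero-th fiber is non-empty and can be written as
\[
H_y^{(0)}=H_y=\phi(y)+\mathrm{Ker}\,A(y),\qquad y\in Q,
\]
so the projection $\Pi_{H_y^{(0)}}v_x$ is well defined for every $y\in Q$ and every fixed $v_x\in\mathbb{R}^s$ (in particular for $v_x\in H_x^{\mathrm{Gl}}\subseteq H_x^{(0)}$).

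Next I would use the standard formula for the euclidean projection onto the affine subspace $\phi(y)+\mathrm{Ker}\,A(y)$. Namely, for any $w\in\mathbb{R}^s$,
\[
\Pi_{\phi(y)+\mathrm{Ker}\,A(y)}(w)=\phi(y)+\Pi_{\mathrm{Ker}\,A(y)}\bigl(w-\phi(y)\bigr)=w-\Pi_1(y)\bigl(w-\phi(y)\bigr),
\]
where in the second equality I used $\Pi_{\mathrm{Ker}\,A(y)}=I-\Pi_1(y)$. Setting $w=v_x$ and using the linearity of $\Pi_1(y)$ yields the decomposition
\[
\gamma_{v_x}(y)=v_x-\Pi_1(y)\,v_x+\Pi_1(y)\,\phi(y),\qquad y\in Q,
\]
which is the key identity of the proof.

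The semialgebraicity of $\gamma_{v_x}$ now follows by applying Lemma \ref{lem:Lemma1} in two distinct instances: first to the (trivially semialgebraic) constant map $y\mapsto v_x$, which gives that $y\mapsto\Pi_1(y)\,v_x$ is semialgebraic on $Q$; and second to the semialgebraic solution $\phi$, which gives that $y\mapsto\Pi_1(y)\,\phi(y)$ is semialgebraic on $Q$. Since sums and differences of semialgebraic functions are semialgebraic, $\gamma_{v_x}$ is semialgebraic on $Q$.

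The argument is almost entirely algebraic once the formula for $\gamma_{v_x}$ is written down, so the main point requiring care is the identification $H_y^{(0)}=H_y$ for every $y\in Q$; this is essentially by definition of the zero-th Glaeser refinement and the fact that the existence of a (semialgebraic) solution forces each fiber to be non-empty, so that $\Pi_{H_y^{(0)}}v_x$ actually makes sense as a map defined on all of $Q$.
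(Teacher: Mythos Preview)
Your proof is correct and follows essentially the same route as the paper: decompose $\Pi_{H_y^{(0)}}v_x$ as $\Pi_1(y)\phi(y)+v_x-\Pi_1(y)v_x$ and apply Lemma~\ref{lem:Lemma1} once to the constant map $y\mapsto v_x$ and once to the semialgebraic solution $\phi$. The only cosmetic difference is that the paper writes $H_y^{(0)}=\mathrm{Ker}\,A(y)+\Pi_1(y)\phi(y)$ and phrases the decomposition as $\Pi_1(y)\phi(y)+\Pi_2(y)v_x$, which is of course the same identity as yours.
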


\begin{proof}
Consider $x\in Q$ and $v_{x}\in H_{x}^{\mathrm{Gl}}$. We define
\[
\gamma_{v_{x}}(y)\coloneqq\varPi_{H_{y}^{(0)}}v_{x},\quad\forall y\in Q.
\]
 By the definition of $H_{y}^{(0)}$ it is true that
\[
H_{y}^{(0)}=\mathrm{Ker}A(y)+\varPi_{1}(y)\,\phi(y),\quad\forall y\in Q,
\]
 and from this expression that
\[
\varPi_{H_{y}^{(0)}}v_{x}=\varPi_{1}(y)\,\phi(y)+\varPi_{2}(y)\,v_{x},\quad\forall y\in Q,
\]
 Now, $Q\ni y\longmapsto\varPi_{2}(y)\,v_{x}=v_{x}-\varPi_{1}(y)\,v_{x}$
is semialgebraic, as $Q\ni y\longmapsto v_{x}$ is semialgebraic (since
it is constant) and so $Q\ni y\longmapsto\varPi_{1}(y)\,v_{x}$ is
semialgebraic by Lemma \ref{lem:Lemma1}. In conclusion, $Q\ni y\longmapsto\varPi_{H_{y}^{(0)}}v_{x}$
is semialgebraic, for it is the sum of semialgebraic functions (recall
that $Q\ni y\longmapsto\varPi_{1}(y)\,\phi(y)$ is also semialgebraic
by Lemma \ref{lem:Lemma1} since $\phi$ is a semialgebraic solution
of system (\ref{eq:TargetProblem})).
\end{proof}
\noindent  After this let us introduce a new notion.
\begin{defn}
Consider a compact metric space $(Q,d),Q\subseteq\mathbb{R}^{n}$.
Given $\mathcal{H}=(H_{x})_{x\in Q}$ whose Glaeser-stable subbundle
is denoted by $(H_{x}^{Gl})_{x\in Q}$, a \emph{semialgebric Glaeser-stable
bundle} associated with the system (\ref{eq:TargetProblem}) is a
family $\widetilde{\mathcal{H}}^{\mathrm{Gl}}=(\widetilde{H}_{x}^{\mathrm{Gl}})_{x\in Q}$
of affine subspaces $\widetilde{H}_{x}^{\mathrm{Gl}}\subseteq\mathbb{R}^{s}$,
parametrized by the points $x\in Q$ , where the fibers $\widetilde{H}_{x}^{\mathrm{Gl}}$
are given by:
\begin{align*}
\widetilde{H}_{x}^{\mathrm{Gl}} & =\{v\in H_{x}^{\mathrm{Gl}}:\exists r_{v}\in\mathbb{R}^{+}\text{ s.t. }\\
 & \qquad\quad B(x,r_{v})\ni y\longmapsto\gamma_{v}(y)=\varPi_{H_{y}^{(0)}}v\text{ is semialgebraic}\}.
\end{align*}
\end{defn}

It is important to notice that $\widetilde{\mathcal{H}}^{\mathrm{Gl}}$
is indeed a bundle, for $\widetilde{H}_{x}^{\mathrm{Gl}}$ is an affine
space, for all $x\in Q$. As a matter of fact:

\noindent $-\;$ if $\widetilde{H}_{x}^{\mathrm{Gl}}=\textrm{Ø}$
the space is affine as we assume the empty space to be an affine space;

\noindent $-\;$ if $\widetilde{H}_{x}^{\mathrm{Gl}}\neq\textrm{Ø}$,
given $v_{0},v_{1},v_{2}\in\widetilde{H}_{x}^{\mathrm{Gl}}$ and $\lambda\in\mathbb{R}$,
we have that
\begin{equation}
(v_{1}-v_{0})+\lambda(v_{2}-v_{0})+v_{0}\in\widetilde{H}_{x}^{\mathrm{Gl}}.\label{eq:Def2.4}
\end{equation}
Property (\ref{eq:Def2.4}) holds because $H_{x}^{\mathrm{Gl}}$ is
an affine space and since $v_{0},v_{1},v_{2}\in\widetilde{H}_{x}^{\mathrm{Gl}}\subseteq H_{x}^{\mathrm{Gl}}$
we have

\[
(v_{1}-v_{0})+\lambda(v_{2}-v_{0})+v_{0}\in H_{x}^{\mathrm{Gl}}.
\]
Moreover
\[
\forall i\in\{0,1,2\},\,\exists r_{v_{i}}\in\mathbb{R}^{+}\text{ s.t. }\;B(x,r_{v_{i}})\ni y\longmapsto\gamma_{i}(y)=\varPi_{H_{y}^{(0)}}v_{i}\text{ is semialgebraic}.
\]
Considering $\overline{r}=\min\{r_{v_{0}},r_{v_{1}},r_{v_{2}}\}$
we get that on $B(x,\overline{r})$
\[
\gamma(y):=\varPi_{H_{y}^{(0)}}((v_{1}-v_{0})+\lambda(v_{2}-v_{0})+v_{0}).
\]
We now consider the orthogonal decomposition of $\gamma(y)$ on to
$\mathrm{Ker}A(y)$ and $\mathrm{Ker}A(y)^{\bot}$
\[
\gamma(y)=\varPi_{1}(y)\varPi_{H_{y}^{(0)}}(v_{1}+\lambda(v_{2}-v_{0}))+\varPi_{2}(y)\varPi_{H_{y}^{(0)}}(v_{1}+\lambda(v_{2}-v_{0})).
\]
Recalling then that the projection of a solution of system (\ref{eq:TargetProblem})
on $\mathrm{Ker}A(y)^{\bot}$ is unique and considering $B(x,\overline{r})\ni y\longmapsto p(y)=\varPi_{1}\gamma_{v_{i}}(y),\quad i=0,1,2,$
gives
\[
\gamma(y)=p(y)+\varPi_{2}(y)(v_{1}+\lambda(v_{2}-v_{0}))
\]
Note that $p$ is semialgebraic on $B(x,\overline{r})$ by Lemma \ref{lem:Lemma1}
and by the uniqueness of $p$ (that is $p(y)=\varPi_{1}(y)\varPi_{H_{y}^{(0)}}v_{0}$
and $\varPi_{H_{y}^{(0)}}v_{0}$ is semialgebraic by hypothesis).
Moreover 
\[
B(x,\overline{r})\ni y\longmapsto\varPi_{2}(y)\,(v_{1}+\lambda(v_{2}-v_{0}))=(v_{1}+\lambda(v_{2}-v_{0}))-\varPi_{1}(y)\,(v_{1}+\lambda(v_{2}-v_{0}))
\]
 is semialgebraic by Lemma \ref{lem:Lemma1} as $B(x,\overline{r})\ni y\longmapsto v_{1}+\lambda(v_{2}-v_{0})$
is semialgebraic since it is constant.
\begin{rem}
\label{rem:Oss1}By Lemma \ref{lem:Lemma2} it follows that if a semialgebraic
solution of system (\ref{eq:TargetProblem}) exists then $\widetilde{\mathcal{H}}^{\mathrm{Gl}}=\mathcal{H}^{\mathrm{Gl}}$.
Moreover if for an $x\in Q$ there is a $v_{x}\in H_{x}^{\mathrm{Gl}}$
such that $B(x,r_{v_{x}})\ni y\longmapsto\gamma_{v_{x}}(y)=\varPi_{H_{y}^{(0)}}v_{x}$
is semialgebraic then $\widetilde{H}_{x}^{\mathrm{Gl}}=H_{x}^{\mathrm{Gl}}$
since if $v'\in H_{x}^{\mathrm{Gl}}$ then
\[
\begin{array}{ccc}
B(x,r_{v_{x}})\ni y\longmapsto\varPi_{H_{y}^{(0)}}v' & = & \varPi_{1}(y)\,\varPi_{H_{y}^{(0)}}v'+\varPi_{2}(y)\,\varPi_{H_{y}^{(0)}}v'\\
 & = & \varPi_{1}(y)\,\gamma_{v_{x}}(y)+\varPi_{2}(y)\,v'\\
 & = & \varPi_{1}(y)\,\gamma_{v_{x}}(y)+v'-\varPi_{1}(y)\,v'
\end{array}
\]
 where the equality in the second line is due to the uniqueness of
the projection of a solution and to the fact that $H_{y}^{(0)}$ is
parallel to $\mathrm{\mathrm{Ker}}A(y)$.

Therefore $B(x,r_{v_{x}})\ni y\longmapsto\varPi_{H_{y}^{(0)}}v'=\varPi_{1}(y)\,\varPi_{H_{y}^{(0)}}v'+\varPi_{2}(y)\,\varPi_{H_{y}^{(0)}}v'$
is semialgebraic as it is a sum of semialgebraic functions by Lemma
\ref{lem:Lemma1} and thus $v_{x}\in\widetilde{H}_{x}^{\mathrm{Gl}}$.
\end{rem}

\section{Existence of a continuous semialgebraic solution}
\begin{thm}
\label{thm:LastThm}Consider a compact metric space\textup{ $Q\subseteq\mathbb{R}^{n}$}
and a system of linear equations
\begin{equation}
A\left(x\right)\phi\left(x\right)=\gamma\left(x\right),\quad x\in Q\label{eq:TargetProblemSystem}
\end{equation}
where the entries of
\[
A(x)=(a_{ij}(x_{1},\ldots,x_{n}))\in M_{r,s}(\mathbb{R})\quad\text{and}\quad\gamma(x)=\left(\gamma_{i}(x)\right)\in\mathbb{R}^{r}
\]
are themselves semialgebric functions on $\mathbb{R}^{n}$.

\noindent  Assume that for every given $x\in Q$ such that $\widetilde{H}_{x}^{\mathrm{Gl}}\neq\emptyset$
there exists $v_{x}\in\widetilde{H}_{x}^{\mathrm{Gl}}$ such that
$B(x,r_{v_{x}})\ni y\longmapsto\gamma_{v_{x}}(y)=\varPi_{H_{y}^{(0)}}v_{x}$
is discontinuous, at most, at isolated points (that therefore must
be finitely many). Then system \emph{(\ref{eq:TargetProblemSystem})}
has a continuous semialgebraic solution $\phi:Q\rightarrow\mathbb{R}^{s}$
iff $\widetilde{\mathcal{H}}^{\mathrm{Gl}}$ has no empty fiber.
\end{thm}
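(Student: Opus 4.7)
The forward direction is short. Suppose the system has a continuous semialgebraic solution $\phi$. Applying Fefferman's theorem of \cite{key-1} to the continuous $\phi$ gives that $\mathcal{H}^{\mathrm{Gl}}$ has no empty fiber, and applying Lemma~\ref{lem:Lemma2} to the semialgebraic $\phi$ gives that, for every $x\in Q$ and every $v\in H_x^{\mathrm{Gl}}$, the map $y\longmapsto\varPi_{H_y^{(0)}}v$ is semialgebraic on $Q$, hence on any ball centred at $x$. Therefore $H_x^{\mathrm{Gl}}\subseteq\widetilde{H}_x^{\mathrm{Gl}}$, which combined with the reverse inclusion yields $\widetilde{\mathcal{H}}^{\mathrm{Gl}}=\mathcal{H}^{\mathrm{Gl}}$ with no empty fiber (cf.\ Remark~\ref{rem:Oss1}).

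For the converse, assume $\widetilde{\mathcal{H}}^{\mathrm{Gl}}$ has no empty fiber. Since $\widetilde{H}_x^{\mathrm{Gl}}\subseteq H_x^{\mathrm{Gl}}$, Fefferman's theorem supplies a continuous solution $\phi_c:Q\to\mathbb{R}^{s}$. My first intermediate goal is to show that $p(y):=\varPi_{1}(y)\phi_c(y)$, which by the argument of Theorem~\ref{thm:Teorema1} is itself a (possibly discontinuous) solution and is independent of the chosen solution, is semialgebraic on $Q$. For each $x\in Q$, the hypothesis furnishes $v_x\in\widetilde{H}_x^{\mathrm{Gl}}$ such that $\gamma_{v_x}$ is semialgebraic on $B(x,r_{v_x})$ with only finitely many (isolated) discontinuities. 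The decomposition $\gamma_{v_x}(y)=p(y)+\varPi_{2}(y)v_x$, which follows from $H_y^{(0)}=p(y)+\mathrm{Ker}\,A(y)$, together with Lemma~\ref{lem:Lemma1} applied to the constant map $y\mapsto v_x$, shows that $p$ is semialgebraic on each such ball. Extracting a finite subcover $\{B(x_i,r_{v_{x_i}})\}_{i=1,\ldots,N}$ by compactness of $Q$, and using that a function semialgebraic on each member of a finite semialgebraic cover is semialgebraic on the union, I conclude that $p$ is semialgebraic on $Q$.

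The central difficulty is then to promote the pair $(\phi_c,p)$ --- continuous but possibly not semialgebraic, semialgebraic but possibly not continuous --- to a single solution possessing both properties. My strategy is to work in the finite subcover. On each ball $B(x_i,r_{v_{x_i}})$, the map $\gamma_{v_{x_i}}$ is itself a semialgebraic solution (it takes values in $H_y^{(0)}$) whose only discontinuities form a finite set $F_i$, and the difference $\phi_c-\gamma_{v_{x_i}}$ is a continuous section of $\mathrm{Ker}\,A(\cdot)$ on that ball. I plan to produce, in a semialgebraic neighbourhood of each point of $F_i$, a semialgebraic section of the kernel bundle that compensates the jump of $\gamma_{v_{x_i}}$, modelled on the values taken by $\phi_c-p$; this is possible because $\mathrm{Ker}\,A(\cdot)$ admits a semialgebraic description through Lemma~\ref{lem:Lemma1} (so $\varPi_{2}(\cdot)$ is semialgebraic) and because the continuity of $\phi_c$ supplies the limiting values at the finitely many correction sites. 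The corrected local solutions are then glued across overlaps: compatibility in the $\mathrm{Ker}\,A(\cdot)^{\perp}$-direction is automatic by the uniqueness statement of Theorem~\ref{thm:Teorema1}, since the $\varPi_{1}$-component of every solution is $p$, so only differences in the kernel direction need to be reconciled, which can be arranged semialgebraically.

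The hardest part will be verifying that this patching yields a genuinely globally continuous function. The isolated-discontinuity hypothesis is essential here, since it reduces the correction problem to a finite set of sites inside the compact $Q$, at each of which $\phi_c$ provides a continuous reference value that can be realised by a semialgebraic section of $\mathrm{Ker}\,A(\cdot)$ thanks to Lemma~\ref{lem:Lemma1}. Once the continuous semialgebraic solution is assembled, the converse direction is complete.
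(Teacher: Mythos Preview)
Your forward direction (continuous semialgebraic solution $\Rightarrow$ no empty fiber) is exactly the paper's argument.

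For the converse you take a genuinely different and harder route, and the ``correction/gluing'' step contains a real gap. The paper avoids your whole promotion mechanism by a single observation you are missing: for each $x\in Q$ and each $v_x\in\widetilde{H}_x^{\mathrm{Gl}}$, the map $\gamma_{v_x}$ is \emph{automatically continuous at the centre $x$}. Indeed, Glaeser stability of $\mathcal{H}^{\mathrm{Gl}}$ gives $\mathrm{dist}(v_x,H_y^{\mathrm{Gl}})\to 0$ as $y\to x$, and since $H_y^{\mathrm{Gl}}\subseteq H_y^{(0)}$ one gets
\[
\bigl\|\Pi_{H_y^{(0)}}v_x-v_x\bigr\|=\mathrm{dist}(v_x,H_y^{(0)})\le \mathrm{dist}(v_x,H_y^{\mathrm{Gl}})\to 0,
\]
so $\gamma_{v_x}(y)\to v_x=\gamma_{v_x}(x)$. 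Combined with the hypothesis that the remaining discontinuities are isolated (hence finitely many and all $\neq x$), one may shrink $r_{v_x}$ to some $\overline r_{v_x}$ so that $\gamma_{v_x}$ is \emph{continuous and semialgebraic on all of} $B(x,\overline r_{v_x})$. Compactness yields a finite subcover $\{B(x_i,\overline r_{v_{x_i}})\}_{i=1}^N$, and the explicit semialgebraic partition of unity built from $\tau_{(x,r)}(y)=\sqrt{r^2-\|y-x\|^2}$ on $B(x,r)$ (and $0$ outside) produces directly
\[
\phi(y)=\frac{\sum_{j=1}^N \tau_{(x_j,\overline r_{v_{x_j}})}(y)\,\Pi_{H_y^{(0)}}v_{x_j}}{\sum_{i=1}^N \tau_{(x_i,\overline r_{v_{x_i}})}(y)},
\]
which is a continuous semialgebraic solution because each $\Pi_{H_y^{(0)}}v_{x_j}\in H_y^{(0)}$ and $H_y^{(0)}$ is affine. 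No correction at discontinuity points, no separate continuous solution $\phi_c$, and no kernel-direction gluing are needed.

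By contrast, your plan to ``compensate the jump of $\gamma_{v_{x_i}}$'' by a semialgebraic kernel section ``modelled on $\phi_c-p$'' is not justified: $\phi_c-p$ is a kernel section but you have no reason to believe it is semialgebraic (that is precisely what you are trying to prove), and at an isolated discontinuity $z\in F_i$ in dimension $n\ge 2$ the map $\gamma_{v_{x_i}}$ need not have a single limit, so it is unclear what ``jump'' a local semialgebraic kernel section should absorb. Your subsequent gluing across overlaps then inherits this vagueness. The paper's shrinking-of-balls argument sidesteps all of this; I would replace your correction step with the continuity-at-the-centre claim above.
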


\begin{proof}
\ 

\noindent  $\bullet\;$ At first we show that if $\widetilde{H}_{x}^{\mathrm{Gl}}$
has no empty fiber then the system (\ref{eq:TargetProblem}) has a
continuous semialgebraic solution $\phi:Q\rightarrow\mathbb{R}^{s}$.

\noindent  By hypothesis we have that $\forall x\in Q,\:\exists v_{x}\in H_{x}^{\mathrm{Gl}}\neq\textrm{Ø, so that }\exists r_{v_{x}}\in\mathbb{R}^{+}$
s.t. $B(x,\overline{r}_{v_{x}})\ni y\longmapsto\gamma_{v_{x}}(y)=\varPi_{H_{y}^{(0)}}v_{x}$
is semialgebraic and possibly discontinuous at isolated points.

\noindent  We next claim that
\begin{align}
\forall x & \in Q,\:\forall v_{x}\in\widetilde{H}_{x}^{\mathrm{Gl}}\neq\textrm{Ø},\exists\overline{r}_{v_{x}}\text{ with }0<\overline{r}_{v_{x}}<r_{v_{x}}\:\text{ s.t.}\label{eq:IpotesiAssurda1}\\
 & \quad B(x,\overline{r}_{v_{x}})\ni y\longmapsto\gamma_{v_{x}}(y)=\varPi_{H_{y}^{(0)}}v_{x}\text{ is continuous}.\nonumber 
\end{align}
 To prove the claim, let us, by contradiction, assume that
\begin{align*}
\exists x & \in Q,\:\exists v_{x}\in\widetilde{H}_{x}^{\mathrm{Gl}}:\forall n\in\mathbb{N},\,\exists y_{n}\in B(x,\frac{r_{v_{x}}}{n})\text{ s.t. }\\
 & \quad B(x,r_{v_{x}})\ni y\longmapsto\gamma_{v_{x}}(y)\text{ is discontinuous at }y_{n}.
\end{align*}
 We have two cases:

1. $\forall n\in\mathbb{N}\text{ one has }y_{n}\neq x$. Then $\gamma_{v_{x}}(y)$
has infinitely many isolated discontinuity points which is impossible
because $\gamma_{v_{x}}$is semialgebraic;

2. $\exists\overline{n}\in\mathbb{N}\text{ such that }y_{\overline{n}}=x$.
If $v_{x}\notin H_{x}^{\mathrm{Gl}}$ then $v_{x}\notin\widetilde{H}_{x}^{\mathrm{Gl}}$
since $H_{x}^{\mathrm{Gl}}\supseteq\widetilde{H}_{x}^{\mathrm{Gl}}$:
this is clearly not possible as we took $v_{x}\in\widetilde{H}_{x}^{\mathrm{Gl}}$.
Now, if we had $v_{x}\in H_{x}^{\mathrm{Gl}}$ then on the one hand
\[
\mathrm{dist}(v_{x};H_{y}^{\mathrm{Gl}})\underset{{\scriptstyle y\rightarrow x}}{\longrightarrow}0,
\]
and, on the other, 
\[
\left\Vert \Pi_{H_{y}^{(0)}}v_{x}-v_{x}\right\Vert =dist(v_{x},H_{y}^{(0)})\underset{{\scriptstyle H_{y}^{\mathrm{Gl}}\subseteq H_{y}^{(0)}}}{\underbrace{\leq}}d(v_{x},H_{y}^{\mathrm{Gl}}).
\]
 Therefore
\[
\Pi_{H_{y}^{(0)}}v_{x}\underset{{\scriptstyle y\rightarrow x}}{\longrightarrow}v_{x}\underset{\overset{\uparrow}{{\scriptstyle v_{x}\in H_{x}^{\mathrm{Gl}}\subseteq H_{x}^{(0)}}}}{=}\Pi_{H_{x}^{(0)}}v_{x}.
\]
This is impossible since $\gamma_{v_{x}}(y)$ would be continuous
at $x$, contrary to the assumption. Thus (\ref{eq:IpotesiAssurda1})
holds.

\vspace{15pt}

\noindent  Now notice that the set of balls $\{B(x,\overline{r}_{v_{x}})\}_{x\in Q}$,
where $v_{x}$ is chosen in $\widetilde{H}_{x}^{\mathrm{Gl}}$, is
an open cover of the compact space $Q$. Then there is $N$ such that
$\{B(x_{i},\overline{r}_{v_{x_{i}}})\}_{i=1,\ldots,N}$ is an open
cover of $Q$. Consider
\[
\tau_{(x,r)}(y)=\begin{cases}
\sqrt{r^{2}-\left\Vert y-x\right\Vert ^{2}} & \text{for }y\in B(x,r),\\
0 & \text{for }y\notin B(x,r).
\end{cases}
\]
 Notice that $\tau_{(x,r)}(y)$ is semialgebraic and continuous on
$Q$, $\forall x\in Q$, $\forall r\in\mathbb{R}^{+}$ and that ${\displaystyle \sum_{i=1}^{N}}\tau_{(x_{i},\overline{r}_{v_{x_{i}}})}(y)>0$
for each $y\in Q$ as $\tau_{(x,r)}(y)\geq0$ for every $y\in Q$
and $\tau_{(x,r)}(y)>0$ for all $y\in B(x,r)$. Moreover, $\forall y\in Q,\,\exists B(x_{i},\overline{r}_{v_{x_{i}}})\text{ as above s.t. }y\in B(x_{i},\overline{r}_{v_{x_{i}}})$
since $\{B(x_{i},\overline{r}_{v_{x_{i}}})\}_{i=1,\ldots,N}$ is an
open covering of $Q$. Hence the function 
\[
\phi(y)=\frac{1}{{\displaystyle \sum_{i=1}^{N}}\tau_{(x_{i},\overline{r}_{v_{x_{i}}})}(y)}\sum_{j=1}^{N}\tau_{(x_{j},\overline{r}_{v_{x_{j}}})}(y)\Pi_{H_{y}^{(0)}}v_{x_{j}}
\]
 is a semialgebraic and continuous solution of the system on $Q$.

\vspace{15pt}

$\bullet\;$ Conversely, we show that if system (\ref{eq:TargetProblem})
has a continuous semialgebraic solution $\phi:Q\rightarrow\mathbb{R}^{s}$
then $\widetilde{H}_{x}^{\mathrm{Gl}}$ has no empty fiber.

\noindent By Remark \ref{rem:Oss1} we have that $\widetilde{\mathcal{H}}^{\mathrm{Gl}}=\mathcal{H}^{\mathrm{Gl}}$
but $\mathcal{H}^{\mathrm{Gl}}$ has no empty fiber because there
is a continuous solution of the system (\ref{eq:TargetProblem}) as
shown in \cite{key-1}.

\vspace{15pt}

\noindent The proof of the theorem is complete.
\end{proof}
Theorem \ref{thm:LastThm} gives therefore an answer to the initial
problem of determining a \emph{necessary and sufficient condition}
for the existence of a solution $Q\ni x\longmapsto\phi\left(x\right)=\left[\begin{array}{c}
\phi_{1}(x)\\
\vdots\\
\phi_{s}(x)
\end{array}\right]\in\mathbb{R}^{s}$ of the system (\ref{eq:TargetProblem}), with the $\phi_{i}:Q\rightarrow\mathbb{R}$
\emph{continuous }and \emph{semialgebric}.
\begin{rem}
\label{rem:Projection}Let $p$ be the projection of a solution of
the system (\ref{eq:TargetProblem}) on $\mathrm{Ker}A(y)^{\perp},\,y\in Q$.
If $p$ is not semialgebraic the system has no semialgebraic solution
by Theorem \ref{thm:Teorema1} and so it has no continuous and semialgebraic
solution. Otherwise, if $p$ is semialgebraic then $\widetilde{\mathcal{H}}^{\mathrm{Gl}}=\mathcal{H}^{\mathrm{Gl}}$
since if $v\in H_{x}^{\mathrm{Gl}}$ we may write
\[
Q\ni y\longmapsto\varPi_{H_{y}^{(0)}}v=p(y)+\varPi_{2}(y)\,v=p(y)+v-\varPi_{1}(y)\,v
\]
 that is semialgebraic by Lemma \ref{lem:Lemma1}. We have that $\gamma_{v_{x}}(y)=p(y)+\varPi_{1}(y)\,v_{x}$,
$\forall y\in B(x,r_{v_{x}})$ since $\gamma_{v_{x}}(y)$ is the projection
of $v_{x}$ on $H_{y}^{(0)}$. This implies that for each $x\in Q$
there is a $v_{x}$ such that $\gamma_{v_{x}}$ is semialgebraic iff
$p$ is semialgebraic and $\gamma_{v_{x}}$ is discontinuous at most
at isolated points for each $x\in Q$ iff $p$ and $B(x,r_{v_{x}})\ni y\longmapsto\varPi_{1}(y)\,v_{x}$
are discontinuous at most at isolated points for each $x\in Q$. Note
that if there is a $v_{x}$ such that $\gamma_{v_{x}}$ is semialgebraic
then $\gamma_{v_{x}}$ is semialgebraic for all $v\in H^{\mathrm{Gl}}$
by Remark \ref{rem:Oss1}.

Hence, if we know a solution $\phi$ of system (\ref{eq:TargetProblem})
or, at least, its projection on $\mathrm{Ker}A(x)^{\perp}$, Theorem
\ref{thm:LastThm} can be written in the following (equivalent) form.
\end{rem}

\begin{thm}
Let $p$ be the projection of a solution of the system on $\mathrm{Ker}A(x)^{\perp}$
and assume that for every given $x\in Q$ there exists $v_{x}\in\widetilde{H}_{x}^{\mathrm{Gl}}$
such that $B(x,r_{v_{x}})\ni y\longmapsto\gamma_{v_{x}}(y)=\varPi_{H_{y}^{(0)}}v_{x}$
is discontinuous, at most, at isolated points (which is the same as
saying that $p$ and $B(x,r_{v_{x}})\ni y\longmapsto\varPi_{1}(y)\,v_{x}$
are discontinuous at most at isolated points for every given $x\in Q$).
We have that the following conditions are equivalent:

i) The system has a continuous and semialgebraic solution.

ii) $\widetilde{\mathcal{H}}^{\mathrm{Gl}}$ has no empty fiber.

iii) The Glaeser-stable refinement $\mathcal{H}$ associated with
the system has no empty fibers and $p$ is semialgebraic.

iv) The system has a continuous solution and a semialgebraic one (they
may possibly be different).
\end{thm}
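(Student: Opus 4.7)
The plan is to prove the equivalence of (i)--(iv) through a short cycle of three equivalences, (i) $\Leftrightarrow$ (ii), (iv) $\Leftrightarrow$ (iii), and (iii) $\Leftrightarrow$ (ii), each of which leans on a result already established in the paper, so essentially no new analytic work is required. The overall strategy is to use Theorem \ref{thm:LastThm} to bridge the semialgebraic bundle condition with the existence of a continuous semialgebraic solution, and then to use Remark \ref{rem:Projection} and Theorem \ref{thm:Teorema1} to translate between the bundle language and the projection language.

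The equivalence (i) $\Leftrightarrow$ (ii) is simply the content of Theorem \ref{thm:LastThm} under the standing discontinuity assumption, and nothing needs to be added. For (iv) $\Leftrightarrow$ (iii) I split (iv) into its two logically independent clauses: by Fefferman's theorem recalled in the introduction, a continuous solution of (\ref{eq:TargetProblem}) exists iff $\mathcal{H}^{\mathrm{Gl}}$ has no empty fiber; by Theorem \ref{thm:Teorema1}, a semialgebraic solution exists iff $p$ is semialgebraic. Since (iv) allows the continuous and the semialgebraic solutions to be distinct, it is precisely the conjunction of these two conditions, which is exactly (iii).

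For (iii) $\Leftrightarrow$ (ii) I use Remark \ref{rem:Projection} in both directions. For (iii) $\Rightarrow$ (ii), semialgebraicity of $p$ upgrades $\mathcal{H}^{\mathrm{Gl}}$ to $\widetilde{\mathcal{H}}^{\mathrm{Gl}}$ fibrewise via Remark \ref{rem:Projection}, so the no-empty-fiber property transfers immediately. For (ii) $\Rightarrow$ (iii), condition (ii) together with the already-proven (i) $\Leftrightarrow$ (ii) produces a continuous semialgebraic solution $\phi$; Lemma \ref{lem:Lemma1} shows that $\varPi_{1}(x)\phi(x)$ is semialgebraic, and the uniqueness observation following Theorem \ref{thm:Teorema1} forces it to coincide with $p$ for any solution, so $p$ is semialgebraic; moreover $\widetilde{H}_{x}^{\mathrm{Gl}} \subseteq H_{x}^{\mathrm{Gl}}$ trivially forces $\mathcal{H}^{\mathrm{Gl}}$ to have no empty fiber, completing (iii).

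The main obstacle lies in the direction (iii) $\Rightarrow$ (ii): a priori the non-emptiness of $H_{x}^{\mathrm{Gl}}$ does not imply non-emptiness of the potentially smaller $\widetilde{H}_{x}^{\mathrm{Gl}}$, because an arbitrary $v \in H_{x}^{\mathrm{Gl}}$ need not satisfy the local semialgebraic projection condition that defines membership in $\widetilde{H}_{x}^{\mathrm{Gl}}$. The leverage provided by Remark \ref{rem:Projection} is decisive here: global semialgebraicity of $p$ on $Q$ ensures that for every $v \in H_{x}^{\mathrm{Gl}}$ the map $y \mapsto \varPi_{H_{y}^{(0)}} v = p(y) + v - \varPi_{1}(y)\, v$ is locally semialgebraic by Lemma \ref{lem:Lemma1}, and this is exactly what promotes every $v \in H_{x}^{\mathrm{Gl}}$ to membership in $\widetilde{H}_{x}^{\mathrm{Gl}}$, thereby guaranteeing non-empty fibers for $\widetilde{\mathcal{H}}^{\mathrm{Gl}}$.
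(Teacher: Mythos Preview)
Your proposal is correct and follows essentially the same route as the paper: the paper proves (i)$\Leftrightarrow$(ii) via Theorem~\ref{thm:LastThm} and Remark~\ref{rem:Projection}, (ii)$\Leftrightarrow$(iii) via Remark~\ref{rem:Projection}, and (iii)$\Leftrightarrow$(iv) via Fefferman's result together with Theorem~\ref{thm:Teorema1}. The only cosmetic difference is that for (ii)$\Rightarrow$(iii) you detour through (i) to obtain semialgebraicity of $p$, whereas the paper extracts it directly from Remark~\ref{rem:Projection} (which observes that the existence of a local semialgebraic $\gamma_{v_x}$ at every $x$ is equivalent to $p$ being semialgebraic); both arguments are valid and of the same length.
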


\begin{proof}
\

We showed that $i)\Leftrightarrow ii)$ in Theorem \ref{thm:LastThm}
(using also Remark \ref{rem:Projection}). The proof of $ii)\Leftrightarrow iii)$
follows from Remark \ref{rem:Projection} and that of $iii)\Longleftrightarrow iv)$
from  Fefferman's result that a system like (\ref{eq:TargetProblem})
has a continuous solution iff the Glaeser-stable refinement $\mathcal{H}$
associated with the system has no empty fibers (see \cite{key-1})
and from Theorem \ref{thm:Teorema1}.
\end{proof}


\begin{thebibliography}{1}
\bibitem{key-1}C. Fefferman, J. Kollár, \textsl{Continuous Solutions
of Linear Equations}, From Fourier analysis and number theory to Radon
transforms and geometry, Dev. Math., vol. 28, Springer, New York,
2013, pp. 233-282. MR 2986959
\end{thebibliography}
\end{document}